\numberwithin{equation}{section}
\newtheorem{theorem}{Theorem}[section]
\newtheorem{corollary}[theorem]{Corollary}
\newtheorem{proposition}[theorem]{Proposition}
\newtheorem{remark}{Remark}[section]
\theoremstyle{definition}
\numberwithin{equation}{section}
\begin{document}
	\title{Infinite and finite dimensional generalized Hilbert tensors	\thanks{This  work was supported by
			the National Natural Science Foundation of P.R. China (Grant No.
			11571095, 11601134).}}

	\date{}
\author{Wei Mei\thanks{School of Mathematics and Information Science,
		Henan Normal University, XinXiang HeNan,  P.R. China, 453007.
		Email: 1017187432@qq.com} , \quad    Yisheng Song\thanks{Corresponding author. School of Mathematics and Information Science  and Henan Engineering Laboratory for Big Data Statistical Analysis and Optimal Control,
		Henan Normal University, XinXiang HeNan,  P.R. China, 453007.
		Email: songyisheng@htu.cn.}}
	\maketitle

	\begin{abstract}
		In this paper, we introduce the concept of an $m$-order $n$-dimensional generalized Hilbert tensor $\mathcal{H}_{n}=(\mathcal{H}_{i_{1}i_{2}\cdots i_{m}})$,
		$$
		\mathcal{H}_{i_{1}i_{2}\cdots i_{m}}=\frac{1}{i_{1}+i_{2}+\cdots i_{m}-m+a},\
		a\in \mathbb{R}\setminus\mathbb{Z}^-;\ i_{1},i_{2},\cdots,i_{m}=1,2,\cdots,n,
		$$
		and show that its $H$-spectral radius and its $Z$-spectral radius are smaller than or equal to $M(a)n^{m-1}$ and $M(a)n^{\frac{m}{2}}$, respectively, here $M(a)$ is a constant only dependent on $a$. Moreover, both infinite and finite dimensional generalized Hilbert tensors are positive definite for $a\geq1$.  For an $m$-order infinite dimensional generalized Hilbert tensor $\mathcal{H}_{\infty}$ with $a>0$, we prove that $\mathcal{H}_{\infty}$ defines a bounded and positively $(m-1)$-homogeneous operator from $l^{1}$ into $l^{p}\ (1<p<\infty)$. The upper bounds of norm of corresponding positively homogeneous operators are obtained.  \vspace{3mm}
		
		\noindent {\bf Key words:}\hspace{2mm} Generalized Hilbert tensor, Spectral radius, Norm, Upper bounds.
	\vspace{3mm}
	
	\noindent {\bf AMS subject classifications (2010):}\hspace{2mm} 47H15, 47H12, 34B10, 47A52, 47J10, 47H09, 15A48, 47H07
	\end{abstract}

	\section{Introduction} Let $\mathbb{R}$ denote the set of all real numbers, and let $\mathbb{Z}$ be the set of all integers. We write $\mathbb{Z}^-$ stands for the set of all non-positive integers, i.e., $$\mathbb{Z}^-=\{k\in \mathbb{Z}: k\leq0\}\mbox{ and }\mathbb{Z}^{--}=\{k\in \mathbb{Z}: k<0\}.$$
An $m$-order $n$-dimensional tensor (hypermatrix) $\mathcal{A} = (a_{i_1\cdots i_m})$ is a multi-array of real entries $a_{i_1\cdots i_m}\in\mathbb{R}$, where $i_j \in I_n=\{1,2,\cdots,n\}$ for $j \in I_m=\{1,2,\cdots,m\}$. $\mathcal{A}$ is
called a {\bf symmetric tensor} if the entries $a_{i_1\cdots i_m}$ are
invariant under any permutation of their indices. Qi \cite{LQ1,LQ2} introduced the concepts of  eigenvalues of the higher order symmetric tensors, and showed the existence of the eigenvalues and some applications.  The concepts of real  eigenvalue was introduced by Lim \cite{LL} independently using a variational approach. Subsequently, many mathematical workers studied the spectral properties of various structured tensors under different  conditions. The spectral properties of nonnegative matrices had been generalized to $n$-dimensional nonnegative tensors under various conditions by Chang et al. \cite{CPZ13,CPT1}, He and Huang \cite{HH2014}, He \cite{H2016}, He et al. \cite{HLK2016}, Li et al. \cite{WLV2015},  Qi \cite{LQ13}, Song and Qi \cite{SQ13,SQ14}, Wang et al. \cite{WZS2016}, Yang and Yang \cite{YY11,YY10} and references therein. \\

For infinite dimensional tensors, the corresponding studies have only just begun, and the conclusions are fewer. Song and Qi \cite{SQ2014} introduced the concept of infinite dimensional Hilbert tensor  and showed that such a Hilbert tensor defines a bounded, continuous and positively $(m-1)$-homogeneous operator from $l^{1}$ into $l^{p}\ (1<p<\infty)$ and the norms of corresponding positively homogeneous operators  are not larger than $\frac{\pi}{\sqrt{6}}$. They also proved that  the spectral radius and $E$-spectral radius of finite dimensional  Hilbert tensor are smaller than or equal to $ n^{m-1}\sin\frac{\pi}{n}$ and $ n^{\frac{m}{2}}\sin\frac{\pi}{n}$, respectively. Clearly, an $m$-order $n$-dimensional  Hilbert tensor  is a Hankel tensor with $v=(1,\frac12, \frac13, \cdots,\frac{1}{nm})$, introduced by  Qi \cite{LQH}. Also see Chen and Qi \cite{CQ2015}, Xu \cite{X2016} for more details of Hankel tensors.  Hilbert tensor (hypermatrix) is a natural extension of  Hilbert matrix, which was introduced by Hilbert \cite{H1894}. For more details of Hilbert matrix, see Frazer\cite{F1946} and Taussky \cite{T1949} for $n$-dimensional Hilbert matrix, Choi \cite{C1983}) and Ingham \cite{I1936} for an
infinite dimensional Hilbert matrix, Magnus \cite{M1950} and Kato \cite{K1957} for  the spectral
properties of infinite dimensional Hilbert matrix.  \\

	In this paper, we study more general Hilbert tensor, which is referred to as ``generalized Hilbert tensor".
	The entries of an $m$-order infinite dimensional generalized Hilbert tensor $\mathcal{H}_{\infty}=(\mathcal{H}_{i_{1}i_{2}\cdots i_{m}})$ are defined by
	\begin{equation}\label{eq11}\mathcal{H}_{i_{1}i_{2}\cdots, i_{m}}=\frac{1}{i_{1}+i_{2}+\cdots +i_{m}-m+a},\  a\in \mathbb{R}\setminus\mathbb{Z}^-,i_{1},i_{2},\cdots i_{m}\in \mathbb{Z}^{++}=-\mathbb{Z}^{--}.\end{equation}
	The entries of an $m$-order $n$-dimensional generalized Hilbert tensor $\mathcal{H}_{n}=(\mathcal{H}_{i_{1}i_{2}\cdots i_{m}})$ are to choose $i_{1},i_{2},\cdots, i_{m}\in \{1,2,\cdots,n\}$  in \eqref{eq11}.
Clearly, both $\mathcal{H}_{n}$ and $\mathcal{H}_{\infty}$ are symmetric and the entries of  the generalized Hilbert tensor with $a\geq1$ can be written as the integral form as follow
	\begin{equation}\label{eq12}
	\mathcal{H}_{i_{1}i_{2}\cdots i_{m}}=\int_{0}^{1}t^{i_{1}+i_{2}+\cdots i_{m}-m+a-1}dt.
	\end{equation}

	For a vector $x=(x_{1},x_{2},\cdots,x_{n})^\top\in \mathbb{R}^n$,  	$ \mathcal{H}_{n}x^{m-1} $ is a vector with its ith component defined by
	\begin{equation}\label{eq13}
	(\mathcal{H}_{n}x^{m-1})_{i}
	=\sum_{i_{2},\cdots,i_{m}=1}^{n}\frac{x_{i_{2}}\cdots x_{i_{m}}}{i+i_{2}+\cdots +i_{m}-m+a},\
	a\in \mathbb{R}\setminus\mathbb{Z}^-,\ i=1,2,\cdots,n.
	\end{equation}
Accordingly, $ \mathcal{H}_{n}x^{m} $ is given by
	\begin{equation}\label{eq14}
	\mathcal{H}_{n}x^{m}=x^\top(\mathcal{H}_{n}x^{m-1}) =\sum_{i_{1},i_{2},\cdots,i_{m}=1}^{n}\frac{x_{i_{1}}x_{i_{2}}\cdots x_{i_{m}}}{i_{1}+i_{2}+\cdots +i_{m}-m+a},a\in \mathbb{R}\setminus\mathbb{Z}^-.
\end{equation}

Let $l^1$ is a space consisting of all real number sequences $x=(x_i)_{i=1}^\infty$ satisfying
$$\sum\limits_{i=1}^{\infty}|x_{i_{i}}|<\infty.$$ 	
	For a real vector $x=(x_{1},x_{2},\cdots,x_{n},x_{n+1},\cdots)\in l^1$,
	$\mathcal{H}_{\infty}x^{m-1}$ is an infinite dimensional vector with its ith component defined by
	\begin{equation}\label{eq15}
	(\mathcal{H}_{\infty}x^{m-1})_{i}
	=\sum_{i_{2},\cdots,i_{m}=1}^{\infty}\frac{x_{i_{2}}\cdots x_{i_{m}}}{i+i_{2}+\cdots +i_{m}-m+a},\
	a\in \mathbb{R}\setminus\mathbb{Z}^-;\ i\in \{1,2,\cdots, n,\cdots\}.
	\end{equation}
Accordingly, $ \mathcal{H}_\infty x^m $ is given by	
		\begin{equation}\label{eq16}
	\mathcal{H}_{\infty}x^{m}
	=\sum_{i_{1},i_{2},\cdots,i_{m}=1}^{\infty}\frac{x_{i_{1}}x_{i_{2}}\cdots x_{i_{m}}}{i_{1}+i_{2}+\cdots +i_{m}-m+a},\
	a\in \mathbb{R}\setminus\mathbb{Z}^- .
\end{equation}

Now we show that both $\mathcal{H}_{\infty}x^{m}$ and $\mathcal{H}_{\infty}x^{m-1}$ are well-defined for all $x\in l^1.$	

	\begin{proposition}\label{pro11}
		Let $\mathcal{H}_{\infty}$ be an $m$-order infinite dimensional generalized Hilbert tensor. Then for all $x\in l^{1}$,\begin{item}
			\item[(i)] $\mathcal{H}_{\infty}x^{m}$ defined by \eqref{eq16} absolutely converges, i.e., $|\mathcal{H}_{\infty}x^{m}|<\infty$;
			\item[(ii)] $\mathcal{H}_{\infty}x^{m-1}$ is well-defined, i.e., for each positive integer $i$, its ith component defined by \eqref{eq15} absolutely converges.
		\end{item}
	\end{proposition}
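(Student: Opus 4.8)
The plan is to reduce both parts to one elementary observation: the denominators occurring in \eqref{eq15} and \eqref{eq16} are uniformly bounded away from $0$ by a constant depending only on $a$. Indeed, for any indices $i_1,\dots,i_m\in\mathbb{Z}^{++}$ one may write
$$
i_1+i_2+\cdots+i_m-m+a=k+a,\qquad k:=(i_1-1)+(i_2-1)+\cdots+(i_m-1)\in\mathbb{Z},\ k\ge 0 .
$$
Since $a\in\mathbb{R}\setminus\mathbb{Z}^-$, we have $k+a\neq 0$ for every integer $k\ge 0$, while $|k+a|\to\infty$ as $k\to\infty$; hence the quantity $c(a):=\min\{|k+a|:k\in\mathbb{Z},\ k\ge 0\}$ is attained and strictly positive, and therefore
$$
\left|\frac{1}{i_1+i_2+\cdots+i_m-m+a}\right|\le\frac{1}{c(a)}
$$
for every admissible choice of indices (this also covers the sums in \eqref{eq15}, whose first index $i\ge 1$ plays the role of one of the $i_j$).

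For part (i) I would estimate the series of absolute values termwise using the bound above, obtaining
$$
\sum_{i_1,i_2,\dots,i_m=1}^{\infty}\left|\frac{x_{i_1}x_{i_2}\cdots x_{i_m}}{i_1+i_2+\cdots+i_m-m+a}\right|
\le\frac{1}{c(a)}\sum_{i_1,i_2,\dots,i_m=1}^{\infty}|x_{i_1}|\,|x_{i_2}|\cdots|x_{i_m}| .
$$
All summands on the right-hand side are nonnegative, so Tonelli's theorem for double (more generally, $m$-fold) series lets me factor the multiple sum into a product of single sums, which gives the bound $\dfrac{1}{c(a)}\Big(\sum_{i=1}^{\infty}|x_i|\Big)^{m}$; this is finite since $x\in l^{1}$. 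Hence $\mathcal{H}_{\infty}x^{m}$ converges absolutely, i.e. $|\mathcal{H}_{\infty}x^{m}|<\infty$.

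For part (ii) I would fix a positive integer $i$ and run the identical argument on the $(m-1)$-fold sum in \eqref{eq15}, which yields
$$
\sum_{i_2,\dots,i_m=1}^{\infty}\left|\frac{x_{i_2}\cdots x_{i_m}}{i+i_2+\cdots+i_m-m+a}\right|
\le\frac{1}{c(a)}\Big(\sum_{j=1}^{\infty}|x_j|\Big)^{m-1}<\infty ,
$$
so the $i$th component of $\mathcal{H}_{\infty}x^{m-1}$ is an absolutely convergent series for each $i$, which is exactly the assertion that $\mathcal{H}_{\infty}x^{m-1}$ is well-defined. There is no substantial obstacle in this proposition; the only place the hypothesis is genuinely used is in asserting $c(a)>0$ (which fails precisely when $a\in\mathbb{Z}^-$, where a denominator vanishes), and the only mild technical point is invoking nonnegativity of the summands to legitimately rewrite the $m$-fold (resp. $(m-1)$-fold) sum as a product of $l^{1}$-norms.
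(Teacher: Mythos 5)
Your proof is correct and follows essentially the same route as the paper: bound the denominator uniformly away from zero by a constant depending only on $a$, then factor the resulting $m$-fold (resp.\ $(m-1)$-fold) sum of nonnegative terms into $\frac{1}{c(a)}\|x\|_{l^1}^{m}$ (resp.\ $\frac{1}{c(a)}\|x\|_{l^1}^{m-1}$). The only cosmetic difference is that you establish $c(a)>0$ by an abstract attained-minimum argument, whereas the paper computes the constant explicitly as $\frac1a$ for $a>0$ and $\frac1{\min\{a-[a],\,1+[a]-a\}}$ for $a<0$; both are valid.
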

	
	\begin{proof}
	Let	$[a]$ denote the largest integer  not exceeding $a$. Then for all $a\in \mathbb{R}\setminus\mathbb{Z}^-$,  it is obvious that for all positive integers $i_{1},i_{2},\cdots,i_{m}$,
		$$
	\min_{i_{1},\cdots,i_{m}}	|i_{1}+i_{2}+\cdots +i_{m}-m+a|=a  \mbox{ for }a>0.
		$$
	For $a<0$, there exist some positive integers $i'_{1},i'_{2},\cdots,i'_{m}$ and $i''_{1},i''_{2},\cdots,i''_{m}$ such that $$i'_{1}+i'_{2}+\cdots+i'_{m}-m=-[a] \mbox{ and } i''_{1}+i''_{2}+\cdots+i''_{m}-m=-[a]-1,$$
		and hence, $$
	\min_{i_{1},\cdots,i_{m}}	|i_{1}+i_{2}+\cdots +i_{m}-m+a|=\min\{a-[a],1-(a-[a])\} \mbox{ for }a<0.
		$$
		
	In conclusion,  we have for $ a\in \mathbb{R}\setminus\mathbb{Z}^- $,
		$$
		\frac1{|i_{1}+i_{2}+\cdots +i_{m}-m+a|}\leq N(a)=\begin{cases}
			\frac1a,&a>0;\\
			\frac1{\min\{a-[a],1+[a]-a\}},&a<0.
		\end{cases}$$
		
		Then for $x=(x_{1},x_{2},\cdots,x_{n},x_{n+1},\cdots)\in l^{1}$, we obtain
		$$
		\aligned
		|\mathcal{H}_{\infty}x^{m}|
		&=\left|\sum_{i_{1},i_{2},\cdots,i_{m}=1}^{\infty}\frac{x_{i_{1}}x_{i_{2}}\cdots x_{i_{m}}}{i_{1}+i_{2}+\cdots +i_{m}-m+a}\right|\\
		&\leq \sum_{i_{1},i_{2},\cdots,i_{m}=1}^{\infty}\frac{|x_{i_{1}}x_{i_{2}}\cdots x_{i_{m}}|}{|i_{1}+i_{2}+\cdots +i_{m}-m+a|}\\
		&\leq N(a) \sum_{i_{1},i_{2},\cdots,i_{m}=1}^{\infty}|x_{i_{1}}||x_{i_{2}}|\cdots |x_{i_{m}}|\\
		&=N(a)\left(\sum_{i=1}^{\infty}|x_{i}|\right)^{m}<\infty
		\endaligned
		$$
		since $\sum_{i=1}^{\infty}|x_i|<\infty$, and hence, $\mathcal{H}_{\infty}x^{m}$ absolutely converges.
		
		Similarly, for each positive integer $i$, we also have $$\aligned |(\mathcal{H}_{\infty}x^{m-1})_{i}|
		=&\left|\sum_{i_{2},\cdots,i_{m}=1}^{\infty}\frac{x_{i_{2}}\cdots x_{i_{m}}}{i+i_{2}+\cdots +i_{m}-m+a}\right|\\
		\leq&N(a) \left(\sum_{k=1}^{\infty}|x_k|\right)^{m-1}<\infty. \endaligned$$
		So  $\mathcal{H}_{\infty}x^{m-1}$ is well-defined.
	\end{proof}
	
	In section 2, we present some concepts and basic facts which are used for late. 	In section 3,  we first show that both infinite and finite dimensional generalized Hilbert tensors are positive definite for $a\geq1$. Let \begin{equation}\label{eq17}
	F_{\infty}x=(\mathcal{H}_{\infty}x^{m-1})^{[\frac{1}{m-1}]}
	\mbox{	and }
	T_{\infty}x =
	\begin{cases}
	\|x\|_{l^1}^{2-m}\mathcal{H}_{\infty}x^{m-1}    & x\neq \theta \\
	\theta  &  x=\theta,
	\end{cases}
	\end{equation}
	where $\theta=(0,0,\cdots,0,\cdots)^\top$. Then we  prove that both $F_{\infty}$ and $T_{\infty}$ is a bounded, continuous and positively homogeneous operator on $l^1$. In particular, their  upper bounds with respect to opertor norm are obtained as follows:  \begin{equation}\label{eq18}\|F_\infty\|=\sup_{\|x\|_{l^1}= 1}\|F_\infty x \|_{l^{2(m-1)}}\leq K(a)\mbox{ and }\|T_\infty\|=\sup_{\|x\|_{l^1}= 1}\|T_\infty x \|_{l^2}\leq C(a),\end{equation}
	where $$ K(a)=\begin{cases}
	\sqrt[2(m-1)]{\frac1{a^2}+\frac{\pi^2}{6}}, &0<a<1; \\
	\sqrt[2(m-1)]{\frac{\pi^2}{6}}, &\ a\geq1;
	\end{cases}\mbox{ and } C(a)=\begin{cases}
	\sqrt{\frac1{a^2}+\frac{\pi^2}{6}}, &0<a<1;  \\
	\frac{\pi}{\sqrt{6}}, &\ a\geq1.
	\end{cases}$$
	
For a finite dimensional generalized Hilbert tensor $\mathcal{H}_n$, we show that  its $H$-spectral radius and its $Z$-spectral radius are smaller than or equal to $M(a)n^{m-1}$ and $M(a)n^{\frac{m}{2}}$, respectively, where $$M(a)=\begin{cases}
\frac1a,&a>0;\\
\frac1{\min\{a-[a], 1+[a]-a\}}, &-m(n-1)<a<0;\\
\frac1{-m(n-1)-a}, &a<-m(n-1).
\end{cases}$$ In particular, for $a>0$, we obtain that the upper bounds of its spectral radius $\rho(\mathcal{H}_{n})$ and its $E$-spectral radius $\rho_E(\mathcal{H}_{n})$, i.e.,
\begin{equation}\label{eq19} \rho(\mathcal{H}_{n})\leq \frac{n^{m-1}}a\mbox{ and } \rho_E(\mathcal{H}_{n})\leq \frac{n^{\frac{m}{2}}}a.\end{equation}

	\section{Preliminaries and basic facts}
	
		Let both $X$ and $Y$ be two real Banach spaces with the norm $\|\cdot\|_X$ and $\|\cdot\|_Y$, respectively, and let $T:X\to Y$ be an operator. Then $T$ is said to be
		\begin{itemize}
			\item[(i)]  $t$-homogeneous if $T(\lambda x)=\lambda^tT(x)$ for all $\lambda\in \mathbb{R}$ and all $x\in X$;
			\item[(ii)]  positively homogeneous if $T(tx)=tT(x)$ for all $t>0$ and all $x\in X$;
			\item[(iii)]  bounded if there exits a real number $M>0$ such that
			$$\|Tx\|_{Y}\leq M\|x\|_{X}\mbox{ for all }x\in X.$$
		\end{itemize}
For a bounded, continuous and positively homogeneous operator $T:X\to Y$, the norm of $T$ may be defined as follows (see Fucik et al. \cite{FNSS}, Song and Qi \cite{SQ14} for more details):
	\begin{equation}\label{eq21}
	\|T\|=\sup\{\|Tx\|_{Y}:\|x\|_{X}=1\}.
	\end{equation}

	For $0<p<\infty$, $l^p$ is a space consisting of all real number sequences $x=(x_i)_{i=1}^\infty$ satisfying
$\sum\limits_{i=1}^{\infty}|x_{i_{i}}|^{p}<\infty.$  For $p\geq1$, it is well known  that
$$\|x\|_{l^p}=\left(\sum_{i=1}^{\infty}|x_{i}|^{p}\right)^{\frac{1}{p}}$$
is the norms defined on the sequences space $l^{p}$.	For $p\geq1$, it is well known  that
	$$\|x\|_{p}=\left(\sum_{i=1}^{n}|x_{i}|^{p}\right)^{\frac{1}{p}}$$
	is the norms defined on $\mathbb{R}^n$. Moreover, the following relationship between the two different norms is  obvious,
	\begin{equation}\label{eq22}
	\|x\|_q\leq \|x\|_p\leq n^{\frac{1}{p}-\frac{1}{q}}\|x\|_q\mbox{ for }    q>p.
	\end{equation}
	
	Let $\mathcal{A}$ be an $m$-order $n$-dimensional symmetric tensor. Then a number $\lambda$ is called an eigenvalue of $\mathcal{A}$ if there exists a nonzero vector $x$ such that
	\begin{equation}\label{eq23}
	\mathcal{A}x^{m-1}=\lambda x^{[m-1]}.
	\end{equation}
	where $x^{[m-1]}=(x_{1}^{m-1},x_{2}^{m-1},\cdots x_{n}^{m-1})^\top$, and call $x$ an eigenvector of $\mathcal{A}$ associated with the eigenvalue $\lambda$. We call such an eigenvalue $H$-eigenvalue if it is real and has a real eigenvector $x$, and call such a real eigenvector $x$ an H-eigenvactor. These concepts were first introduced by Qi \cite{LQ1} for  the
	higher order symmetric tensors. Lim \cite{LL} independently
	introduced the notion of eigenvalue for higher order tensors but restricted $x$ to be a real vector and $\lambda$ to be a real number.
	
Qi \cite{LQ1} introduced  another concept of tensor eigenvalue.  	A number $\mu$ is said to be an $E$-eigenvalue of $\mathcal{A}$ if there exists a nonzero vector $x$ such that
	\begin{equation}\label{eq24}
	\mathcal{A}x^{m-1}=\mu x(x^{T}x)^{\frac{m-2}{2}},
	\end{equation}
	and such a nonzero vector $x$ is called an $E$-eigenvector of $\mathcal{A}$ associated with $\mu$. It is clear that if $x$ is real, then $\mu$ is also real. In this case, $\mu$ and $x$ are called a $Z$-eigenvalue of $\mathcal{A}$ and a $Z$-eigenvector of $\mathcal{A}$, respectively.  Qi \cite{LQ1, LQ2} extended some nice properties of symmetric matrices to higher order symmetric tensors. The Perron-Frobenius theorem of nonnegative matrices had been generalized to higher order nonnegative tensors under various conditions by Chang, Pearson and Zhang \cite{CPZ13,CPT1}, Qi \cite{LQ13}, Song and Qi \cite{SQ13,SQ14},  Yang and Yang \cite{YY11,YY10} and references therein.\
	
\section{Generalized Hilbert tensors}
In this section, we mainly discuss the properties of infinite and finite dimensional generalized Hilbert tensors. It is easy to see that both finite and infinite dimensional generalized Hilbert tensor $\mathcal{H}_{n}$ and $\mathcal{H}_{\infty}$ are  symmetric but not always positive semi-definite since $a\in \mathbb{R}\setminus\mathbb{Z}^-$. Howerver, for all $a>0$, both finite and infinite dimensional generalized Hilbert tensors are positive, it follows from the definition of strictly copositive tensors that $\mathcal{H}_{n}$ and $\mathcal{H}_{\infty}$ are strictly copositive, i.e.,
$$\mathcal{H}_{n}x^{m}>0\mbox{ for all nonnegative nonzero vector } x\in \mathbb{R}^{n}, $$
$$\mathcal{H}_{\infty}x^{m}>0\mbox{ for all   nonnegative nonzero vector } x\in l^{1}.$$ The concept of strictly copositive tensors  was introduced by Qi
\cite{LQ13}. Also see Song and Qi \cite{SQ2015} for more details.  Now, we give the positive definitiveness of such two tensors when $a\geq1$.

\subsection{Positive definitiveness}

\begin{theorem}\label{thm31} Let $m$ be even and  $a\geq1$. Then  $m$-order generalized Hilbert tensors  $\mathcal{H}_{\infty}$ and $\mathcal{H}_{n}$ are both positive definite, i.e.,
	$$\mathcal{H}_{\infty}x^{m}>0\mbox{ for all  nonzero vector } x\in l^{1},$$
	$$\mathcal{H}_{n}x^{m}>0\mbox{ for all nonzero vector } x\in \mathbb{R}^n. $$
\end{theorem}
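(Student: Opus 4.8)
The plan is to reduce both statements to the positivity of a single one‑variable integral by exploiting the integral representation \eqref{eq12}, which is available precisely because $a\ge 1$. Write $x\in l^1$ (the case $x\in\mathbb{R}^n$ is recovered by padding with zeros) and set $f(t)=\sum_{i=1}^{\infty}x_i t^{i-1}$; since $|x_i t^{i-1}|\le|x_i|$ on $[0,1]$ and $\sum_i|x_i|<\infty$, this power series converges uniformly on $[0,1]$, so $f$ is continuous there and, for each fixed $t\in[0,1]$, the $m$-fold sum factorizes as $\sum_{i_1,\dots,i_m}(x_{i_1}t^{i_1-1})\cdots(x_{i_m}t^{i_m-1})=f(t)^m$. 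Substituting \eqref{eq12} into \eqref{eq16} and interchanging summation and integration then gives
\begin{equation*}
\mathcal{H}_{\infty}x^{m}=\int_0^1 t^{a-1}\Big(\sum_{i=1}^{\infty}x_i t^{i-1}\Big)^{m}dt=\int_0^1 t^{a-1}f(t)^{m}\,dt ,
\end{equation*}
and likewise $\mathcal{H}_{n}x^{m}=\int_0^1 t^{a-1}\big(\sum_{i=1}^{n}x_i t^{i-1}\big)^m dt$ in finite dimensions. The interchange is legitimate because for $a\ge1$ every exponent satisfies $i_1+\cdots+i_m-m+a-1\ge a-1\ge0$, so $\int_0^1 t^{i_1+\cdots+i_m-m+a-1}dt=\mathcal{H}_{i_1\cdots i_m}\le\frac1a$ and $\sum_{i_1,\dots,i_m}|x_{i_1}\cdots x_{i_m}|\,\mathcal{H}_{i_1\cdots i_m}\le\frac1a\big(\sum_i|x_i|\big)^m<\infty$; hence Fubini's theorem for the product of counting measure and Lebesgue measure on $[0,1]$ applies, which is essentially the estimate underlying Proposition \ref{pro11}.

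With this identity in place, the conclusion is mostly sign bookkeeping. Since $m$ is even, $f(t)^m\ge0$ for all $t$, and $t^{a-1}\ge0$ on $[0,1]$ with $t^{a-1}>0$ on $(0,1)$; hence $\mathcal{H}_{\infty}x^m\ge0$ and $\mathcal{H}_{n}x^m\ge0$ for every $x$. For strict positivity, suppose $x\ne0$ but $\mathcal{H}_{\infty}x^m=0$. The integrand $t\mapsto t^{a-1}f(t)^m$ is continuous and nonnegative on the open interval $(0,1)$ and has vanishing integral, so it vanishes identically there; since $t^{a-1}>0$ on $(0,1)$, this forces $f\equiv0$ on $(0,1)$. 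But $f$ is the sum of a power series with radius of convergence at least $1$ (the coefficients $x_i$ are bounded, as $x\in l^1$), so vanishing on an interval forces every coefficient $x_i=0$, contradicting $x\ne0$. Therefore $\mathcal{H}_{\infty}x^m>0$. In the finite-dimensional case $f$ is a nonzero polynomial of degree at most $n-1$, hence has at most $n-1$ zeros in $(0,1)$, so $t^{a-1}f(t)^m>0$ for all but finitely many $t\in(0,1)$ and the integral is again strictly positive.

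The only genuinely delicate step is the term-by-term integration in the infinite-dimensional case, and I expect that to be the main obstacle; I would justify it either by the Fubini estimate just sketched or, equivalently, by noting that the partial sums $\sum_{i=1}^{N}x_it^{i-1}$ converge to $f$ uniformly on $[0,1]$, so that $\big(\sum_{i=1}^N x_i t^{i-1}\big)^m\to f(t)^m$ uniformly and the elementary finite-dimensional identity passes to the limit under the integral sign. A secondary point worth stating carefully is the passage from ``$\mathcal{H}_\infty x^m=0$'' to ``$f\equiv 0$'': it is cleanest to argue on the open interval $(0,1)$, where $t^{a-1}$ is strictly positive and the integrand is continuous, thereby avoiding any discussion of the behaviour of $t^{a-1}$ at the endpoints. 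Everything else — nonnegativity of $f(t)^m$ because $m$ is even, and uniqueness of power-series coefficients — is standard.
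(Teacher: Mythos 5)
Your proof is correct and follows essentially the same route as the paper: the integral representation turns $\mathcal{H}_{\infty}x^{m}$ into $\int_0^1 t^{a-1}f(t)^m\,dt$ with $f(t)=\sum_{i}x_i t^{i-1}$ (the paper distributes $t^{\frac{a-1}{m}}$ over the $m$ factors, which is the same identity, justified by the same uniform-convergence argument), and evenness of $m$ plus continuity gives $f\equiv 0$ on $(0,1)$ when the integral vanishes. The only divergence is the final step: you invoke the identity theorem for power series to conclude that all $x_i=0$, whereas the paper proves this from scratch by an induction that extracts one coefficient at a time via continuity at $t=0$; both are valid.
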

\begin{proof}
	For each $x=(x_{1},x_{2},\cdots,x_{n},x_{n+1},\cdots)\in l^{1}$ and $t\in [0,1]$, it is obvious that $$|t^{i-1+\frac{a-1}{m}}x_{i}|=|t|^{i-1+\frac{a-1}{m}}|x_{i}|\leq |x_i|,$$ which implies that the infinite series $\sum\limits_{i=1}^{\infty}t^{i-1+\frac{a-1}{m}}x_{i}$ uniformly converges with respect to $t\in [0,1]$. Then it follows from integral form of $\mathcal{H}_{i_{1}i_{2}\cdots i_{m}}$ that
	$$
	\aligned
	\mathcal{H}_{\infty}x^{m}&=\sum_{i_2,i_2,\cdots,i_m=1}^{\infty}\int_0^1t^{i_1+i_2+\cdots i_m-m+a-1}x_{i_1}x_{i_2}\cdots x_{i_m}dt\\
	&=\sum_{i_{1},i_{2},\cdots,i_{m}=1}^{\infty}\int_0^1(t^{i_{1}-1+\frac{a-1}{m}}x_{i_{1}})(t^{i_{2}-1+\frac{a-1}{m}}x_{i_{2}})\cdots (t^{i_{m}-1+\frac{a-1}{m}}x_{i_{m}})dt\\
	&=\int_{0}^{1}\left(\sum_{i_{1}=1}^{\infty}t^{i_{1}-1+\frac{a-1}{m}}x_{i_{1}}\right)\left(\sum_{i_2=1}^{\infty}t^{i_2-1+\frac{a-1}{m}}x_{i_2}\right)\cdots \left(\sum_{i_m=1}^{\infty}t^{i_m-1+\frac{a-1}{m}}x_{i_m}\right)dt\\
	&=\int_{0}^{1}\left(\sum_{i=1}^{\infty}t^{i-1+\frac{a-1}{m}}x_i\right)^mdt\geq0\ (m\mbox{ is even}).
	\endaligned
	$$
	
	Now we show $\mathcal{H}_{\infty}x^{m}>0$  for all nonzero vector $ x\in l^{1}$.  Suppose not, then there exists a nonzero vector $x\in l^1$ such that
	$\mathcal{H}_\infty x^{m}=0$, i.e.,
	$$\int_{0}^{1}\left(\sum_{i=1}^\infty t^{i-1+\frac{a-1}{m}}x_{i}\right)^{m}dt=0.$$
	Since $m$ is even, $\left(\sum\limits_{i=1}^\infty t^{i-1+\frac{a-1}{m}}x_{i}\right)^{m}\geq0$, and hence, by the continuity, we have
	$$ t^{\frac{a-1}{m}}\left(x_{1}+\left(\sum\limits_{i=2}^\infty t^{i-1}x_{i}\right)\right)=\sum_{i=1}^\infty t^{i-1+\frac{a-1}{m}}x_{i}=0\mbox{ for all } \ t\in [0,1].$$
	So for all $t\in (0,1]$, we have
	$$x_{1}+\left(\sum\limits_{i=2}^\infty t^{i-1}x_{i}\right)=0.$$
	Since the polynomial $x_{1}+\left(\sum\limits_{i=2}^\infty t^{i-1}x_{i}\right)$ is continuous at $t=0$, we see that
	$$x_{1}+\left(\sum\limits_{i=2}^\infty t^{i-1}x_{i}\right)=0 \mbox{ for all } \ t\in [0,1].$$
	Let $t=0$. Then $x_1=0$, and so, $$t^{\frac{a-1}{m}+1}\left(x_2+\sum\limits_{i=3}^\infty t^{i-2}x_{i}\right)=\sum\limits_{i=2}^\infty t^{i-1+\frac{a-1}{m}}x_{i}=0 \mbox{ for all } \ t\in [0,1].$$
	So, for all $t\in (0,1]$, we have $$x_2+\sum\limits_{i=3}^\infty t^{i-2}x_{i}=0.$$ Again by the continuity, we see that $$x_2+\sum\limits_{i=3}^\infty t^{i-2}x_{i}=0\mbox{ for all } \ t\in [0,1].$$
	Take $t=0$, then we have $x_2=0$.
	
	By the mathematical induction, suppose $x_1=x_2=\cdots=x_k=0$. Then we only need show $x_{k+1}=0$. In fact,
	$$ t^{\frac{a-1}{m}+k}\left(x_{k+1}+\left(\sum\limits_{i=k+2}^\infty t^{i-(k+1)}x_{i}\right)\right)= \sum_{i=k+1}^\infty t^{i-1+\frac{a-1}{m}}x_{i}=0\mbox{ for all } \ t\in [0,1].$$
	So, for all $t\in (0,1]$, we have $$x_{k+1}+\left(\sum\limits_{i=k+2}^\infty t^{i-(k+1)}x_{i}\right)=0.$$ Again by the continuity, we see that $$x_{k+1}+\left(\sum\limits_{i=k+2}^\infty t^{i-(k+1)}x_{i}\right)=0\mbox{ for all } \ t\in [0,1].$$
	Take $t=0$, then we have $x_{k+1}=0$.  So we may conclude that
	$$x_i=0\mbox{ for all } i=1,2,\cdots,n,n+1,\cdots,$$
	and hence, $x=\theta=(0,0,\cdots,0,0,\cdots)^\top$, which is a contradiction. The desired conclusion follows.
	
	Similarly, we can obtain that $\mathcal{H}_n$ is positive definite.
\end{proof}

\begin{remark}
	We show the positive definitiveness of infinite and finite dimensional generalized Hilbert tensors with $a\geq1.$ Then it is unknown  whether or not $\mathcal{H}_n$ and $\mathcal{H}_\infty$ have the positive definitiveness for $0<a<1$ or $a<0$ with $a\in \mathbb{R}\setminus\mathbb{Z}^-$.
\end{remark}
	
	\subsection{Infinite dimensional generalized Hilbert tensors}

	Let
\begin{equation}\label{eq31}
F_{\infty}x=(\mathcal{H}_{\infty}x^{m-1})^{[\frac{1}{m-1}]}\mbox{ ($m$ is even)}
\end{equation}
and
\begin{equation}\label{eq32}
T_{\infty}x =
\begin{cases}
 \|x\|_{l^1}^{2-m}\mathcal{H}_{\infty}x^{m-1}    & x\neq \theta \\
 \theta  &  x=\theta,
\end{cases}
\end{equation}
where $x^{[\frac{1}{m-1}]}=(x_{1}^{\frac{1}{m-1}},x_{2}^{\frac{1}{m-1}},\cdots x_{n}^{\frac{1}{m-1}},\cdots)^\top$ and $\theta=(0,0,\cdots, 0,\cdots)^\top$, the zero element of a vector space $l^p$. It is easy to see that both operators $F_{\infty}$ and $T_{\infty}$ are continuous and positively homogeneous. Therefore our main interests are to study the boundedness of two classes of operators.

	\begin{theorem}\label{thm32}
		Let $F_{\infty}$ and $T_{\infty}$ are defined by Eqs. \eqref{eq31} and \eqref{eq32}, respectively. Assume that  $a>0$.\begin{itemize}
			\item[(i)] If $x\in l^{1}$, then $F_{\infty}x\in l^{p}$ for $m-1<p<\infty$. Moreover, $F_{\infty}$ is a bounded, continuous and positively homogeneous operator from $l^{1}$ into $l^{p}\ (m-1<p<\infty)$. In particular, 	$$\|F_\infty\|=\sup_{\|x\|_{l^1}= 1}\|F_\infty x \|_{l^{2(m-1)}}\leq K(a),$$
			where $$ K(a)=\begin{cases}
			\sqrt[2(m-1)]{\frac1{a^2}+\frac{\pi^2}{6}}, &0<a<1; \\
			\sqrt[2(m-1)]{\frac{\pi^2}{6}}, &\ a\geq1.
			\end{cases}$$
		\item[(ii)] If $x\in l^{1}$, then $T_{\infty}x\in l^{p}$ for $1<p<\infty$. Moreover,  $T_{\infty}$ is a bounded, continuous and positively homogeneous operator from $l^{1}$ into $l^{p}\ (1<p<\infty)$. In particular, $$\|T_\infty\|=\sup_{\|x\|_{l^1}= 1}\|T_\infty x \|_{l^2}\leq C(a),$$
			where $$ C(a)=\begin{cases}
		\sqrt{\frac1{a^2}+\frac{\pi^2}{6}}, &0<a<1; \ \\
		\frac{\pi}{\sqrt{6}}, &\ a\geq1.
		\end{cases}$$
		\end{itemize}
	\end{theorem}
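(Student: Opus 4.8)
The plan is to estimate the $l^p$-norms of $F_\infty x$ and $T_\infty x$ directly, using the componentwise bound on $(\mathcal H_\infty x^{m-1})_i$ that comes from the integral representation, and then recognize the resulting series as (a tail of) $\sum_{i\ge 1} i^{-2}$. First I would observe that for $a>0$ and $x\in l^1$ the integral form $\mathcal H_{i_1\cdots i_m}=\int_0^1 t^{i_1+\cdots+i_m-m+a-1}\,dt$ holds (this needs $a\ge 1$ in \eqref{eq12}, but for $0<a<1$ one still has $\frac1{i+i_2+\cdots+i_m-m+a}=\int_0^1 t^{i+\cdots-m+a-1}dt$ since the exponent exceeds $-1$), so that
$$
(\mathcal H_\infty x^{m-1})_i=\int_0^1 t^{\,i-1+a-1}\Bigl(\sum_{k=1}^\infty t^{\,k-1}x_k\Bigr)^{m-1}dt .
$$
Writing $g(t)=\sum_{k\ge1}t^{k-1}x_k$, which satisfies $|g(t)|\le\|x\|_{l^1}$ on $[0,1]$, I get $|(\mathcal H_\infty x^{m-1})_i|\le \|x\|_{l^1}^{m-1}\int_0^1 t^{\,i+a-2}dt=\frac{\|x\|_{l^1}^{m-1}}{i+a-1}$.

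The second step is to feed this bound into the two norm computations. For $T_\infty$: assuming $\|x\|_{l^1}=1$,
$$
\|T_\infty x\|_{l^2}^2=\sum_{i=1}^\infty\bigl|(\mathcal H_\infty x^{m-1})_i\bigr|^2\le\sum_{i=1}^\infty\frac1{(i+a-1)^2}.
$$
For $a\ge1$ the terms $\frac1{(i+a-1)^2}\le\frac1{i^2}$ (with strict inequality when $a>1$, and equality termwise when $a=1$), so the sum is $\le\frac{\pi^2}{6}$, giving $\|T_\infty\|\le\frac{\pi}{\sqrt6}$. For $0<a<1$ I would split off the $i=1$ term, bounding $\frac1{a^2}+\sum_{i\ge2}\frac1{(i+a-1)^2}\le\frac1{a^2}+\sum_{i\ge2}\frac1{(i-1)^2}=\frac1{a^2}+\frac{\pi^2}{6}$, which is $C(a)^2$. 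This also shows $\|T_\infty x\|_{l^2}<\infty$, and since $\|x\|_{l^p}\le\|x\|_{l^2}$ for $p\ge2$ while for $1<p<2$ one uses $|(\mathcal H_\infty x^{m-1})_i|^p\le\bigl(\frac1{i+a-1}\bigr)^p$ and $\sum i^{-p}<\infty$, we get $T_\infty x\in l^p$ for all $1<p<\infty$; boundedness on all of $l^1$ follows by positive homogeneity (rescale an arbitrary $x\ne\theta$ to unit norm). For $F_\infty$: each component of $F_\infty x$ is $(\mathcal H_\infty x^{m-1})_i^{1/(m-1)}$ (well-defined since $m$ is even and, one should note, $\mathcal H_\infty x^{m-1}$ has nonnegative components when... — actually here I must be slightly careful: $F_\infty$ raises to a fractional power, so I would interpret $y^{[1/(m-1)]}$ via $\mathrm{sgn}(y)|y|^{1/(m-1)}$, and then $|(F_\infty x)_i|=|(\mathcal H_\infty x^{m-1})_i|^{1/(m-1)}$). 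With $\|x\|_{l^1}=1$,
$$
\|F_\infty x\|_{l^{2(m-1)}}^{2(m-1)}=\sum_{i=1}^\infty|(\mathcal H_\infty x^{m-1})_i|^{2}\le\sum_{i=1}^\infty\frac1{(i+a-1)^2},
$$
which is exactly the same series, so $\|F_\infty\|\le K(a)=C(a)^{1/(m-1)}$; membership in $l^p$ for $m-1<p<\infty$ follows because $|(F_\infty x)_i|^p=|(\mathcal H_\infty x^{m-1})_i|^{p/(m-1)}\le(i+a-1)^{-p/(m-1)}$ with $p/(m-1)>1$, so the series converges, and the full-space boundedness again follows by positive homogeneity. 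Continuity of both operators was already noted in the text right before the theorem, so I would only remark on it.

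The main obstacle, and the only genuinely delicate point, is justifying the interchange of summation and integration that produces $(\mathcal H_\infty x^{m-1})_i=\int_0^1 t^{i+a-2}g(t)^{m-1}dt$ — the same Fubini/uniform-convergence argument already used in the proof of Theorem~\ref{thm31} applies, since $\sum_k t^{k-1}x_k$ converges uniformly on $[0,1]$ and the integrand is dominated by $\|x\|_{l^1}^{m-1}$ — together with the bookkeeping of the cases $a\ge1$ versus $0<a<1$ in the series $\sum(i+a-1)^{-2}$ (the shift by one index in the sub-case $0<a<1$ is what makes the bound $\frac1{a^2}+\frac{\pi^2}{6}$ rather than something larger). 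Everything else is the routine norm estimate plus the rescaling argument for passing from unit-norm vectors to all of $l^1$.
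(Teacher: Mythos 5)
Your proposal is correct and follows essentially the same route as the paper: both proofs reduce everything to the single componentwise estimate $|(\mathcal{H}_{\infty}x^{m-1})_{i}|\leq \frac{1}{i-1+a}\|x\|_{l^1}^{m-1}$, then sum the resulting series $\sum_{i\geq 1}(i-1+a)^{-s}$ with the same case split at $a=1$ and the same comparison with $\sum i^{-2}=\pi^2/6$. The one place you diverge is in how you obtain that componentwise bound: you go through the integral representation and must then justify interchanging the multiple sum with the integral (the point you rightly flag as delicate, and which for $0<a<1$ also requires extending \eqref{eq12} beyond the range $a\geq1$ stated in the paper). The paper avoids this entirely by the termwise observation that $i+i_2+\cdots+i_m-m+a\geq i-1+a$ whenever $i_2,\dots,i_m\geq1$, so the denominator of every summand is already at least $i-1+a$ and no Fubini argument is needed. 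Your route is sound, but the elementary inequality makes the ``only genuinely delicate point'' of your argument unnecessary.
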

	\begin{proof}
		For $x\in l^{1}$,
		$$
		\aligned
		|(\mathcal{H}_{\infty}x^{m-1})_{i}|
		&=\lim_{n\rightarrow\infty}\left|\sum_{i_{2},\cdots,i_{m}=1}^{n}\frac{x_{i_{2}}\cdots x_{i_{m}}}{i+i_{2}+\cdots +i_{m}-m+a}\right|\\
		&\leq\lim_{n\rightarrow\infty}\sum_{i_{2},\cdots,i_{m}=1}^{n}\frac{|x_{i_{2}}\cdots x_{i_{m}}|}{|i+i_{2}+\cdots +i_{m}-m+a|}\\
		&\leq\lim_{n\rightarrow\infty}\sum_{i_{2},\cdots,i_{m}=1}^{n}\frac{|x_{i_{2}}||x_{i_{3}}|\cdots |x_{i_{m}}|}{|i+\underbrace{1+\cdots +1}_{m-1}-m+a|}\\
		&=\frac{1}{i-1+a}\lim_{n\rightarrow\infty}\sum_{i_{2},\cdots,i_{m}=1}^{n}|x_{i_{2}}||x_{i_{3}}|\cdots |x_{i_{m}}|\\
		&=\frac{1}{i-1+a}\lim_{n\rightarrow\infty}\left(\sum_{k=1}^{n}|x_{k}|\right)^{m-1}\\
		&=\frac{1}{i-1+a}\left(\sum_{k=1}^{\infty}|x_{k}|\right)^{m-1}\\
		&=\frac{1}{i-1+a}\|x\|_{l^1}^{m-1}.
		\endaligned
		$$
	Since $a>0$, then for all positive integer $i>1$, we have $$\frac1{(i-1+a)^s}\leq \frac1{(i-1)^s}.$$
 So the series $\sum\limits_{i=1}^{\infty}\frac1{(i-1+a)^s}$ converges whenever $s>1$.

		(i) For $m-1<p<\infty$, it follows from the definition of $F_{\infty}$ that
		$$
		\aligned
		\sum_{i=1}^{\infty}|(F_{\infty}x)_{i}|^{p}
		&=\sum_{i=1}^{\infty}|\left(\mathcal{H}_{\infty}x^{m-1}\right)_{i}^{\frac{1}{m-1}}|^{p}\\
		&=\sum_{i=1}^{\infty}|\left(\mathcal{H}_{\infty}x^{m-1}\right)_{i}|^{\frac{p}{m-1}}\\
		&\leq\sum_{i=1}^{\infty}\left(\frac{1}{(i-1+a)}\|x\|_{l^1}^{m-1}\right)^{\frac{p}{m-1}}\\
		&=\|x\|_{l^1}^{p}\sum_{i=1}^{\infty}\frac{1}{(i-1+a)^{\frac{p}{m-1}}}<\infty
		\endaligned
		$$	
		since  $s=\frac{p}{m-1}>1$. Thus $F_{\infty}x\in l^{p}$ for all $x\in l^{1}$. Furthermore, we have
		\begin{equation}
		\label{eq33}\|F_{\infty}x\|_{l^p}=\left(\sum_{i=1}^{\infty}|(F_{\infty}x)_{i}|^{p}\right)^{\frac{1}{p}}\leq M\|x\|_{l^1},
		\end{equation}
	where $M=\left(\sum\limits_{i=1}^{\infty}\frac{1}{(i-1+a)^{\frac{p}{m-1}}}\right)^{\frac{1}{p}}$. Therefore $F_{\infty}$ is a bounded, continuous and positively homogeneous operator from $l^{1}$ into $l^{p}\ (m-1<p<\infty)$. Take $p=2(m-1)$, then for $a\geq1$, we have $a-1\geq0$, and hence,
	$$M=\left(\sum\limits_{i=1}^{\infty}\frac{1}{(i-1+a)^2}\right)^{\frac{1}{p}}\leq \left(\sum\limits_{i=1}^{\infty}\frac{1}{i^2}\right)^{\frac{1}{p}}=\sqrt[2(m-1)]{\frac{\pi^2}{6}}.$$
	If $0<a<1$, then
	$$M=\left(\sum\limits_{i=1}^{\infty}\frac{1}{(i-1+a)^2}\right)^{\frac{1}{p}}\leq \left(\frac{1}{(1-1+a)^2}+\sum\limits_{i=2}^{\infty}\frac{1}{(i-1)^2}\right)^{\frac{1}{p}}=\sqrt[2(m-1)]{\frac1{a^2}+\frac{\pi^2}{6}}.$$  It follows from \eqref{eq21} and \eqref{eq33} that
	$$\|F_\infty\|=\sup_{\|x\|_{l^1}= 1}\|F_\infty x \|_{l^{2(m-1)}}\leq M\leq K(a).$$	
	
		(ii) For $ 1<p<\infty$, it follows from the definition of $T_{\infty}$ that
		$$
		\aligned
		\sum_{i=1}^{\infty}|(T_{\infty}x)_{i}|^{p}
		&=\sum_{i=1}^{\infty}|(\|x\|_{l^1}^{2-m}\mathcal{H}_{\infty}x^{m-1})_{i}|^{p}\\
		&=\|x\|_{l^1}^{(2-m)p}\sum_{i=1}^{\infty}|(\mathcal{H}_{\infty}x^{m-1})_{i}|^{p}\\
		&\leq\|x\|_{l^1}^{(2-m)p}\sum_{i=1}^{\infty}\left(\frac{1}{(i-1+a)}\|x\|_{l^1}^{m-1}\right)^{p}\\
		&=\|x\|_{l^1}^{p}\sum_{i=1}^{\infty}\frac{1}{(i-1+a)^p}<\infty
		\endaligned
		$$
	since $s=p>1.$ Thus, $T_{\infty}x\in l^{p}$ for all $x\in l^{1}$, Furthermore, we have
		\begin{equation}
		\label{eq34}\|T_{\infty}x\|_{l^p}=(\sum_{i=1}^{\infty}|(T_{\infty}x)_{i}|^{p})^{\frac{1}{p}}\leq C\|x\|_{l^1},
		\end{equation}
	where $ C=\left(\sum\limits_{i=1}^{\infty}\frac{1}{(i-1+a)^{p}}\right)^{\frac{1}{p}}>0$. So  $T_{\infty}$ is a bounded, continuous and positively homogeneous operator from $l^{1}$ into $l^{p}\ (1<p<\infty)$.  Similarly, take $p=2$, then for $a\geq1$, we have
	$$C=\left(\sum\limits_{i=1}^{\infty}\frac{1}{(i-1+a)^2}\right)^{\frac{1}{2}}\leq\left(\sum\limits_{i=1}^{\infty}\frac{1}{i^2}\right)^{\frac{1}{2}}\leq\sqrt{\frac{\pi^2}{6}}.$$ If $0<a<1$, then
	$$C=\left(\sum\limits_{i=1}^{\infty}\frac{1}{(i-1+a)^2}\right)^{\frac{1}{2}}\leq\left(\frac{1}{(1-1+a)^2}+\sum\limits_{i=2}^{\infty}\frac{1}{(i-1)^2}\right)^{\frac12}=\sqrt{\frac1{a^2}+\frac{\pi^2}{6}}.$$  It follows from \eqref{eq21} and \eqref{eq34} that
	$$\|T_\infty\|=\sup_{\|x\|_{l^1}= 1}\|T_\infty x \|_{l^2}\leq C\leq C(a).$$	The desired results follow.
	\end{proof}
	From the proof of Theorem \ref{thm32}, it is easy to obtain the following properties of operator defined by infinite dimensional generalized Hilbert tensor $\mathcal{H}_{\infty}$.
	\begin{theorem}\label{thm33}
		For an $m$-order infinite dimensional generalized Hilbert tensor $\mathcal{H}_{\infty}$ and $a>0$, let $f(x)=\mathcal{H}_{\infty}x^{m-1}$. Then f is a bounded, continuous and positively $(m-1)$-homogeneous operator from $l^{1}$ into $l^{p}\ (1<p<\infty)$.
	\end{theorem}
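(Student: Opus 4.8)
The plan is to obtain this statement almost for free from the estimates already carried out in the proof of Theorem~\ref{thm32}, the only genuinely new ingredient being continuity. First I would record the pointwise bound established there: since $a>0$ and every summation index is at least $1$, we have $i+i_2+\cdots+i_m-m+a\ge i-1+a>0$, so for every $x\in l^1$ and every positive integer $i$,
$$|(\mathcal{H}_{\infty}x^{m-1})_{i}|\le\frac{1}{i-1+a}\,\|x\|_{l^1}^{m-1},$$
the right-hand side being finite by Proposition~\ref{pro11}. Fixing $1<p<\infty$, raising this to the $p$-th power and summing over $i$ gives
$$\sum_{i=1}^{\infty}|(f(x))_{i}|^{p}\le\|x\|_{l^1}^{(m-1)p}\sum_{i=1}^{\infty}\frac{1}{(i-1+a)^{p}}<\infty,$$
because $\sum_{i}(i-1+a)^{-p}$ converges for $p>1$ (as noted in the proof of Theorem~\ref{thm32}, using $\tfrac{1}{(i-1+a)^{p}}\le\tfrac{1}{(i-1)^{p}}$ for $i\ge2$). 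Hence $f(x)\in l^p$ and, with $M'=\bigl(\sum_{i=1}^{\infty}(i-1+a)^{-p}\bigr)^{1/p}<\infty$, we obtain $\|f(x)\|_{l^p}\le M'\|x\|_{l^1}^{m-1}$, so $f$ maps bounded subsets of $l^1$ into bounded subsets of $l^p$, i.e. $f$ is bounded. Positive $(m-1)$-homogeneity is immediate from \eqref{eq15}: for $t>0$, $f(tx)=\mathcal{H}_{\infty}(tx)^{m-1}=t^{m-1}\mathcal{H}_{\infty}x^{m-1}=t^{m-1}f(x)$.

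For continuity I would use the multilinear telescoping identity: for $x,y\in l^1$,
$$x_{i_2}\cdots x_{i_m}-y_{i_2}\cdots y_{i_m}=\sum_{j=2}^{m}x_{i_2}\cdots x_{i_{j-1}}(x_{i_j}-y_{i_j})\,y_{i_{j+1}}\cdots y_{i_m}.$$
Substituting this into the defining series \eqref{eq15} for $(f(x))_i-(f(y))_i$, rearranging (legitimate since the multi-indexed series converges absolutely, again by Proposition~\ref{pro11} applied to $(|x_i|+|y_i|)_{i}\in l^1$), and bounding $|i+i_2+\cdots+i_m-m+a|^{-1}\le(i-1+a)^{-1}$, one reaches
$$|(f(x))_{i}-(f(y))_{i}|\le\frac{1}{i-1+a}\,\|x-y\|_{l^1}\sum_{j=2}^{m}\|x\|_{l^1}^{j-2}\|y\|_{l^1}^{m-j}.$$
Taking the $l^p$-norm in $i$ and using $\sum_{i}(i-1+a)^{-p}<\infty$ then gives the local Lipschitz-type estimate
$$\|f(x)-f(y)\|_{l^p}\le M'\,\|x-y\|_{l^1}\sum_{j=2}^{m}\|x\|_{l^1}^{j-2}\|y\|_{l^1}^{m-j},$$
from which continuity of $f:l^1\to l^p$ follows at once.

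I do not expect a real obstacle here: the theorem is essentially a repackaging of the work done for Theorem~\ref{thm32}. The only point requiring a little care is the interchange of summation order in the telescoping step, which is justified by the absolute convergence already proved in Proposition~\ref{pro11}; beyond that, everything is a direct consequence of the single pointwise estimate $|(\mathcal{H}_{\infty}x^{m-1})_i|\le(i-1+a)^{-1}\|x\|_{l^1}^{m-1}$ together with the convergence of $\sum_i(i-1+a)^{-p}$ for $p>1$.
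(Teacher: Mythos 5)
Your proposal is correct and follows essentially the same route as the paper, which obtains Theorem~\ref{thm33} directly from the pointwise estimate $|(\mathcal{H}_{\infty}x^{m-1})_{i}|\le (i-1+a)^{-1}\|x\|_{l^1}^{m-1}$ and the convergence of $\sum_{i}(i-1+a)^{-p}$ for $p>1$ established in the proof of Theorem~\ref{thm32}. Your telescoping argument for continuity is a genuine (and correct) addition: the paper merely asserts continuity without proof, whereas your local Lipschitz estimate $\|f(x)-f(y)\|_{l^p}\le M'\|x-y\|_{l^1}\sum_{j=2}^{m}\|x\|_{l^1}^{j-2}\|y\|_{l^1}^{m-j}$ actually establishes it.
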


	\begin{remark}\begin{itemize}
			\item[(i)] In Theorem \ref{thm33}, $\mathcal{H}_{\infty}$ defines a bounded operator for $a>0$. Then it is unknown whether or not $\mathcal{H}_{\infty}$ is  bounded for $a<0$ with $a\in \mathbb{R}\setminus\mathbb{Z}^-$.
			\item[(ii)] For an $m$-order infinite dimensional generalized Hilbert tensor, the upper bounds of norm of corresponding positively homogeneous operators are showed for $a>0$, then for $a<0$   with $a\in \mathbb{R}\setminus\mathbb{Z}^-$, it is unknown whether  have similar conclusions or not.
			\item[(iii)] Are the upper bounds  the best in Theorem \ref{thm32}?
		\end{itemize}
		
	\end{remark}
	
	\subsection{Finite dimensional generalized Hilbert tensors}

	\begin{theorem}\label{thm34}
		Let $\mathcal{H}_{n}$ be an $m$-order $n$-dimensional generalized Hilbert tensor. Assume that $a\in \mathbb{R}\setminus \mathbb{Z}^-$  and
		$$M(a)=\begin{cases}
		\frac1a,&a>0;\\
		\frac1{\min\{a-[a], 1+[a]-a\}}, &-m(n-1)<a<0;\\
		\frac1{-m(n-1)-a}, &a<-m(n-1),
		\end{cases}$$ where $[a]$ denotes the largest integer not exceeding $a$. Then
		\begin{itemize}
			\item[(i)] $|\lambda|\leq M(a)n^{m-1}$ for all $H$-eigenvalues $\lambda$ of generalized Hilbert tensor $\mathcal{H}_{n}$ if $m$ is even;
			\item[(ii)] $|\mu|\leq  M(a)n^{\frac{m}{2}}$ for all $Z$-eigenvalues $\mu$ of generalized Hilbert tensor $\mathcal{H}_{n}$.
		\end{itemize}	
	\end{theorem}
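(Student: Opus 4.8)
The plan is to reduce both statements to a single uniform bound on the entries of $\mathcal{H}_n$, and then apply the familiar ``largest coordinate'' estimate to an eigenvector. First I would show that $|\mathcal{H}_{i_1\cdots i_m}|\le M(a)$ for all $i_1,\dots,i_m\in\{1,\dots,n\}$. The point is that $k:=i_1+\cdots+i_m-m$ ranges exactly over $\{0,1,\dots,m(n-1)\}$, so it suffices to bound $\frac1{|k+a|}$ on this finite set. This splits into the three regimes defining $M(a)$: if $a>0$ then $|k+a|=k+a\ge a$; if $a<-m(n-1)$ then every denominator is negative with $|k+a|=-a-k\ge-a-m(n-1)>0$; and if $-m(n-1)<a<0$ then the two integers $-[a]$ and $-[a]-1$, which are the nearest integers to $-a$, both lie in $\{0,\dots,m(n-1)\}$ (this is precisely where the hypothesis $a>-m(n-1)$ is used), so $\min_k|k+a|=\min\{a-[a],\,1+[a]-a\}$, which is strictly positive because $a\notin\mathbb{Z}^-$.

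For part (i) I would take an $H$-eigenpair $\mathcal{H}_n x^{m-1}=\lambda x^{[m-1]}$ with $x\neq 0$ and $m$ even, and pick an index $k$ with $|x_k|=\|x\|_\infty=\max_j|x_j|>0$. Taking moduli in the $k$th coordinate of the eigenequation and inserting the entry bound gives
\[
|\lambda|\,|x_k|^{m-1}\le\sum_{i_2,\dots,i_m=1}^{n}\frac{|x_{i_2}|\cdots|x_{i_m}|}{|k+i_2+\cdots+i_m-m+a|}\le M(a)\Big(\sum_{j=1}^{n}|x_j|\Big)^{m-1}=M(a)\,\|x\|_1^{m-1}.
\]
Since $\|x\|_1\le n\|x\|_\infty=n|x_k|$, dividing by $|x_k|^{m-1}>0$ yields $|\lambda|\le M(a)n^{m-1}$. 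For part (ii) I would take a $Z$-eigenpair $(\mu,x)$ with $x$ real and nonzero; as the $Z$-eigenvalue is invariant under scaling of $x$, normalize $\|x\|_2=1$, so $x^\top x=1$ and the equation reduces to $\mathcal{H}_n x^{m-1}=\mu x$. With $k$ chosen as before, use $\|x\|_1\le\sqrt n\,\|x\|_2=\sqrt n$ (by \eqref{eq22}) and $\|x\|_\infty\ge n^{-1/2}\|x\|_2=n^{-1/2}$ to get
\[
|\mu|\,|x_k|=\bigl|(\mathcal{H}_n x^{m-1})_k\bigr|\le M(a)\,\|x\|_1^{m-1}\le M(a)\,n^{(m-1)/2},
\]
and dividing by $|x_k|\ge n^{-1/2}$ gives $|\mu|\le M(a)n^{(m-1)/2}\cdot n^{1/2}=M(a)n^{m/2}$.

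I expect the only delicate step to be the entry bound in the middle regime $-m(n-1)<a<0$, where one must verify that the integer closest to $-a$ is genuinely attained as $i_1+\cdots+i_m-m$ with all $i_j\le n$ — this is exactly what $a>-m(n-1)$ guarantees, and once $a\le -m(n-1)$ the nearest attainable denominator slides to the endpoint $m(n-1)+a$, which is why $M(a)$ takes a different form there. Everything else is the routine largest-coordinate argument together with the norm comparison \eqref{eq22}; in particular, for $a>0$ one has $M(a)=1/a$, recovering $\rho(\mathcal{H}_n)\le n^{m-1}/a$ and $\rho_E(\mathcal{H}_n)\le n^{m/2}/a$ as stated in \eqref{eq19}.
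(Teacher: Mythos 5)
Your proof is correct, and it takes a genuinely different route from the paper's. Both arguments start from the same uniform entry bound $|\mathcal{H}_{i_1\cdots i_m}|\le M(a)$ (your verification of the middle regime $-m(n-1)<a<0$, including the attainability of both integers $-[a]$ and $-[a]-1$ as values of $i_1+\cdots+i_m-m$, matches the paper's reasoning), but they diverge afterwards. The paper works with the full homogeneous form: it shows $|\mathcal{H}_n x^m|\le M(a)\|x\|_1^m$, converts $\|x\|_1$ to $\|x\|_m$ or $\|x\|_2$ via \eqref{eq22}, and then uses the Rayleigh-quotient identities $\mathcal{H}_n y^m=\lambda\|y\|_m^m$ (valid since $m$ is even) and $\mathcal{H}_n z^m=\mu\|z\|_2^m$ to extract the eigenvalue. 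You instead use a single coordinate of the eigenequation at a maximal-modulus entry, i.e.\ a Gershgorin/largest-coordinate estimate, combined with $\|x\|_1\le n\|x\|_\infty$ and $\|x\|_\infty\ge n^{-1/2}\|x\|_2$. The two routes yield identical constants here. The paper's version is the more natural one if one thinks of $M(a)n^{m-1}$ and $M(a)n^{m/2}$ as bounds on the variational quantities $\sup|\mathcal{H}_n x^m|/\|x\|_m^m$ and $\sup|\mathcal{H}_n x^m|/\|x\|_2^m$, which is how Corollary \ref{cor35} on the spectral radii is then read off; your coordinate argument has the mild advantage that part (i) does not actually use the evenness of $m$ (only $|x_k^{m-1}|=|x_k|^{m-1}$), and it extends verbatim to eigenpairs with complex eigenvectors, giving the bound for all eigenvalues rather than only $H$-eigenvalues.
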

	\begin{proof} Since $a\in \mathbb{R}\setminus \mathbb{Z}^-$, Using the proof technique of Proposition \ref{pro11}, it is obvious that for all $i_{1},i_{2},\cdots,i_{m}\in\{1,2,\cdots, n\}$,
		$$\aligned |i_{1}+i_{2}+\cdots +i_{m}-m+a|&\geq a\mbox{ for } a>0,\\
		|i_{1}+i_{2}+\cdots +i_{m}-m+a|&\geq \min\{a-[a],1-(a-[a])\} \mbox{ for } -m(n-1)<a<0,\\
		|i_{1}+i_{2}+\cdots +i_{m}-m+a|&\geq-m(n-1)-a \mbox{ for } a<-m(n-1).
		\endaligned$$
	In conclusion, $$\frac1{|i_{1}+i_{2}+\cdots +i_{m}-m+a|}\leq M(a).$$
	Then	for each nonzero vector $x\in \mathbb{R}^n$, we have
		$$
		\aligned
		|\mathcal{H}_{n}x^{m}|&=\left|\sum_{i_{1},i_{2},\cdots,i_{m}=1}^{n}\frac{x_{i_{1}}x_{i_{2}}\cdots x_{i_{m}}}{i_{1}+i_{2}+\cdots +i_{m}-m+a}\right|\\
		&\leq\sum_{i_{1},i_{2},\cdots,i_{m}=1}^{n}\frac{|x_{i_{1}}x_{i_{2}}\cdots x_{i_{m}}|}{|i_{1}+i_{2}+\cdots +i_{m}-m+a|}\\
		&\leq M(a)\sum_{i_{1},i_{2},\cdots,i_{m}=1}^{n}|x_{i_{1}}||x_{i_{2}}|\cdots,|x_{i_{m}}|\\
		&=M(a)(\sum_{i=1}^{n}|x_{i}|)^{m}\\
		&=M(a)\|x\|_1^{m}.
		\endaligned
		$$
		(i) From the fact that $\|x\|_{1}\leq n^{1-\frac{1}{m}}\|x\|_{m}$, it follows that
		$$
		\aligned
		|\mathcal{H}_{n}x^{m}|&\leq M(a)\|x\|_{1}^{m}\leq M(a)(n^{1-\frac{1}{m}}\|x\|_{m})^{m}\\
		&=M(a)n^{m-1}\|x\|_{m}^{m},
		\endaligned
		$$
		and hence, for all  nonzero vector $x\in \mathbb{R}^n$, we have
	\begin{equation}\label{eq35}	\left|\mathcal{H}_{n}\left(\frac{x}{\|x\|_{m}}\right)^{m}\right|= \frac{|\mathcal{H}_{n}x^m|}{\|x\|_{m}^{m}}
		\leq M(a)n^{m-1}.
		\end{equation}
		It follows from the definition of eigenvalues of tensor that for each $H$-eigenvalue  $\lambda$ of generalized Hilbert tensor $\mathcal{H}_{n}$, there exists a   nonzero vector $y\in \mathbb{R}^n$ such that $$\mathcal{H}_{n}y^{m-1}=\lambda y^{[m-1]},$$
		and so, $$\mathcal{H}_{n}y^m=y^\top(\mathcal{H}_{n}y^{m-1})=\lambda y^\top y^{[m-1]}=\lambda\sum_{i=1}^{n}y_i^m=\lambda\|y\|_m^m$$
		since $m$ is even. Thus, it follows from \eqref{eq35} that $$
		|\lambda|=\frac{|\mathcal{H}_{n}y^m|}{\|y\|^m_m}=\left|\mathcal{H}_{n}\left(\frac{y}{\|y\|_{m}}\right)^m\right|\leq M(a)n^{m-1}.$$
		Since $\lambda$ is arbitrary $H$-eigenvalue, this obtains the conclusion (i).
		
		(ii) Similarly, from the fact that $\|x\|_{1}\leq \sqrt{n}\|x\|_{2}$, it follows that
		$$|\mathcal{H}_{n}x^{m}|\leq M(a)\|x\|_{1}^{m}
		\leq M(a) n^{\frac{m}{2}}\|x\|_{2}^{m},
		$$
		and hence, for all nonzero vector $x\in \mathbb{R}^n$, we have\begin{equation}\label{eq36}
		\left|\mathcal{H}_{n}\left(\frac{x}{\|x\|_{2}}\right)^{m}\right|=\frac{|\mathcal{H}_{n}x^m|}{\|x\|_{2}^m}
		\leq M(a)n^{\frac{m}{2}}.
		\end{equation}
		It follows from the definition of $Z$-eigenvalues of tensor that for each $Z$-eigenvalue  $\mu$ of generalized Hilbert tensor $\mathcal{H}_{n}$, there exists a nonzero vector $z\in \mathbb{R}^n$ such that $$\mathcal{H}_{n}z^{m-1}=\mu z(z^\top z)^{\frac{m-2}{2}},$$
		and so, $$\mathcal{H}_{n}z^m=z^\top(\mathcal{H}_{n}z^{m-1})=\mu z^\top z(z^\top z)^{\frac{m-2}{2}}=\mu\left(\sum_{i=1}^{n}z_i^2\right)^{\frac{m}2}=\mu\|z\|_2^m.$$
	 Thus, by \eqref{eq36}, we have
		$$|\mu|=\frac{|\mathcal{H}_{n}z^m|}{\|z\|_2^m}=\left|\mathcal{H}_{n}\left(\frac{z}{\|z\|_2}\right)^m\right|\leq M(a)n^{\frac{m}2}.$$
		Since $\mu$ is arbitrary $Z$-eigenvalue, the desired conclusion follows.
	\end{proof}

Let  $a>0$. Then the generalized Hilbert tensor $\mathcal{H}_{n}$ is positive (all entries are positive) and symmetric, and  hence, its spectral radius $\rho(\mathcal{H}_{n})$ and its $E$-spectral radius $\rho_E(\mathcal{H}_{n})$ is its $H$-eigenvalue and its $Z$-eigenvalue, respectively. For more details, see Chang, Pearson and Zhang \cite{CPZ13,CPT1}, Qi \cite{LQ13}, Song and Qi \cite{SQ13,SQ14} and Yang and Yang \cite{YY11,YY10}.  So the following colusions are easy to obtain by Theorem \ref{thm34}.

\begin{corollary} \label{cor35}
Let $\mathcal{H}_{n}$ be an $m$-order $n$-dimensional generalized Hilbert tensor. Assume that $a>0$. Then
\begin{itemize}
	\item[(i)] $|\lambda|\leq \rho(\mathcal{H}_{n})\leq \frac{n^{m-1}}a$ for all eigenvalues $\lambda$ of generalized Hilbert tensor $\mathcal{H}_{n}$ if $m$ is even;
	\item[(ii)] $|\mu|\leq \rho_E(\mathcal{H}_{n})\leq \frac{n^{\frac{m}{2}}}a$ for all $E$-eigenvalues $\mu$ of generalized Hilbert tensor $\mathcal{H}_{n}$.
\end{itemize}	
\end{corollary}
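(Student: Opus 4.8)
The statement to prove is Corollary \ref{cor35}, which derives bounds on the spectral radius $\rho(\mathcal{H}_n)$ and the $E$-spectral radius $\rho_E(\mathcal{H}_n)$ of a finite-dimensional generalized Hilbert tensor when $a>0$. The plan is to reduce everything to Theorem \ref{thm34} together with standard Perron–Frobenius-type results for nonnegative (indeed positive) symmetric tensors.

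\medskip

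\textbf{Step 1: Specialize $M(a)$ to $a>0$.} Since $a>0$, the first branch of the piecewise definition of $M(a)$ applies, so $M(a)=\tfrac{1}{a}$. Hence Theorem \ref{thm34} immediately gives $|\lambda|\leq \tfrac{n^{m-1}}{a}$ for every $H$-eigenvalue $\lambda$ when $m$ is even, and $|\mu|\leq \tfrac{n^{m/2}}{a}$ for every $Z$-eigenvalue $\mu$.

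\medskip

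\textbf{Step 2: Identify the spectral radii as eigenvalues of the stated type.} For $a>0$, all entries $\mathcal{H}_{i_1\cdots i_m} = \tfrac{1}{i_1+\cdots+i_m-m+a}$ are strictly positive, so $\mathcal{H}_n$ is a positive (a fortiori nonnegative and weakly irreducible) symmetric tensor. I would invoke the Perron–Frobenius theory for nonnegative tensors (Chang–Pearson–Zhang \cite{CPZ13,CPT1}, Qi \cite{LQ13}, Yang–Yang \cite{YY11,YY10}, Song–Qi \cite{SQ13,SQ14}, all cited in the excerpt): for a nonnegative symmetric tensor the spectral radius $\rho(\mathcal{H}_n)$ — defined as the maximum modulus of its eigenvalues — is itself an $H$-eigenvalue with a nonnegative $H$-eigenvector, and likewise the $E$-spectral radius $\rho_E(\mathcal{H}_n)$ is a $Z$-eigenvalue. (When $m$ is even the $H$-eigenvalue statement is the relevant one for the spectral-radius bound; the $Z$-eigenvalue statement needs no parity assumption.) Consequently $\rho(\mathcal{H}_n)$ falls under case (i) of Theorem \ref{thm34} and $\rho_E(\mathcal{H}_n)$ under case (ii).

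\medskip

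\textbf{Step 3: Chain the inequalities.} Combining Steps 1 and 2: for any eigenvalue $\lambda$ of $\mathcal{H}_n$ (with $m$ even) we have $|\lambda|\leq \rho(\mathcal{H}_n)$ by definition of the spectral radius, and since $\rho(\mathcal{H}_n)$ is an $H$-eigenvalue, $\rho(\mathcal{H}_n)\leq M(a)n^{m-1}=\tfrac{n^{m-1}}{a}$ by Theorem \ref{thm34}(i); this gives part (i). Similarly, for any $E$-eigenvalue $\mu$, $|\mu|\leq \rho_E(\mathcal{H}_n)$, and $\rho_E(\mathcal{H}_n)\leq M(a)n^{m/2}=\tfrac{n^{m/2}}{a}$ by Theorem \ref{thm34}(ii); this gives part (ii). Both bounds follow with no further computation.

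\medskip

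\textbf{Main obstacle.} There is no hard analytic step here — Theorem \ref{thm34} already did the estimation work. The only point requiring care is the invocation in Step 2 that the (largest-modulus) eigenvalue and the $E$-spectral radius of a positive symmetric tensor are realized as an $H$-eigenvalue and a $Z$-eigenvalue respectively; this is a citation to the Perron–Frobenius literature rather than something to be reproved, and I would simply state it with the appropriate references as the excerpt already does in the paragraph preceding the corollary. Thus the ``proof'' is essentially: specialize $M(a)=1/a$, recall the Perron–Frobenius facts, and apply Theorem \ref{thm34}.
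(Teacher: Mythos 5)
Your proposal is correct and follows exactly the paper's route: specialize $M(a)=\tfrac1a$ for $a>0$, invoke the Perron--Frobenius theory for positive symmetric tensors (as the paper does in the paragraph preceding the corollary, with the same citations) to realize $\rho(\mathcal{H}_n)$ as an $H$-eigenvalue and $\rho_E(\mathcal{H}_n)$ as a $Z$-eigenvalue, and then apply Theorem \ref{thm34}. No discrepancies to report.
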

	
	\begin{remark}
		 We prove  the upper bounds of  spectral radii in the above results.  These upper bounds may not be the best, then the following problems deserve us further research:
		
		  What is the best upper bounds? How compute such an upper bound?
	\end{remark}
	

\end{document}